\documentclass[a4paper,reqno,12pt]{amsart}

\usepackage[T1]{fontenc}
\usepackage{amsthm} % you can keep amsthm
\usepackage{amsmath, amssymb, epsfig}
\usepackage{url}
\usepackage[mathscr]{euscript}
\usepackage{color}
\usepackage{harpoon}
\usepackage{tikz}
\usepackage{pgfplots}
\pgfplotsset{compat=1.18}
\usepackage{float}
\usetikzlibrary{shapes.geometric, calc}
\usepackage{newtxtext}
\usepackage{newtxmath}
\usepackage{hyperref}

\usepackage{fullpage} 
\usepackage{setspace}
\usepackage{adjustbox}

\usepackage{mathtools}
\mathtoolsset{showonlyrefs}
\def\today{\ifcase\month\or
  January\or February\or March\or April\or May\or June\or
  July\or August\or September\or October\or November\or December\fi
  \space\number\day, \number\year}

\newtheorem{theorem}{Theorem}
\newtheorem{conjecture}{Conjecture}

\newtheorem{corollary}[theorem]{Corollary}

\newcommand{\E}{\mathbb{E}}

\newcommand{\Z}{\mathscr{Z}}

\newcommand{\z}{\mathbb{Z}}

\renewcommand{\r}{\mathbb{R}}
\newcommand{\be}{\beta}

\newcommand{\si}{\sigma}

\newcommand{\p}{\varphi}

\newcommand{\rp}{\r_{\geq 0}}

\newcommand{\maj}{\prec_{\text{maj}}}
\newcommand{\conv}{\prec_{\text{conv}}}

\begin{document}

%------------------HEADINGS------------------------

\title[]{Convex order, log-convexity and Moments of Averages of Random Variables}
\author[Gon\c{c}alves]{Felipe Gon\c{c}alves, Jos\'e Madrid and Antonio Pedro Ramos}
\date{\today}
\subjclass[2010]{60E15, 26D15, 05A20}
\keywords{Convex order, majorization, stochastic ordering, sample means, log-convexity}
\address{IMPA - Instituto de Matemática Pura e Aplicada, Rio de Janeiro, 22460-320, Brazil.}
\email{goncalves@impa.br}

\address{Department of  Mathematics, Virginia Polytechnic Institute and State University,  225 Stanger Street, Blacksburg, VA 24061-1026, USA}
\email{josemadrid@vt.edu}

\address{IMPA - Instituto de Matemática Pura e Aplicada, Rio de Janeiro, 22460-320, Brazil.}
\email{antonio.ramos@impa.br}

\allowdisplaybreaks
\usetikzlibrary{patterns}
%\numberwithin{theorem}{subsection}{section}
%\numberwithin{equation}{section}

%------------------ABSTRACT------------------------------

\begin{abstract}
We establish a convex order comparison for products of averages of exchangeable nonnegative random variables. As a consequence, we solve a conjecture of Lamkin and Tkocz (2022) on the log-convexity of moments of sample means, as well as its pointwise version, in a more general form involving arbitrary integer
partitions.
\end{abstract}

%---------------------TITLE--------------------------------

\maketitle

%---------------------HAVE--FUN!-----------------------------------

\section{Introduction}
The classical theory of majorization provides stochastic order comparisons
for weighted sums of exchangeable random variables. More precisely, if $X_1,\ldots,X_m$ are exchangeable and $a\maj b$, then
\[
\sum_{i=1}^m a_iX_i\conv\sum_{i=1}^m b_iX_i.
\]
This result goes back to Marshall and Proschan \cite{MP}; see also
\cite{MOA,NPS} for the theory of stochastic majorization and its applications
in probability and statistics. It contains, in particular, the familiar fact
that averages of i.i.d.\ random variables decrease in convex order as the
sample size increases.

The question considered in this paper is nonlinear. We compare products of
normalized sums over disjoint blocks when the vector of block sizes changes
in the majorization order. These products are not covered directly by the
classical comparison for weighted sums, nor by its extension due to Boland,
Proschan and Tong \cite{BPT}, since the product of the block sums does not have
the required joint convexity or concavity.

Our main motivation comes from a conjecture of Lamkin and Tkocz \cite{LT} on
the moments of averages of i.i.d.\ nonnegative random variables. They
conjectured the log-convexity of these moments for powers and
also formulated a stronger pointwise problem. Our main result proves this
pointwise statement and extends it from partitions arising in their
conjecture to arbitrary integer partitions and arbitrary convex test
functions. The corresponding convex order and moment consequences are
explained after the statement of the main result.

\subsection{Main result}

Given two vectors $a,b\in \r^n$ we write $a \maj b$ (and say that $b$ {majorizes} $a$) if 
$$
\sum_{i=1}^j a_i^{\downarrow} \leq  \sum_{i=1}^j b_i^\downarrow
$$
for all $j=1,2,\ldots,n$, with equality when $j=n$, where $\downarrow$ signifies the decreasing rearrangement order. For these and other classical notions we refer to \cite{MOA}. 

The following is the main result of this paper.

\begin{theorem}\label{thm:point}
Let $a,b\in \z_{>0}^n$ be partitions of $m$. Let $A_j = a_1+\dots + a_j$, $B_j=b_1+\dots+b_j$ and $A_0=B_0=0$. Then a necessary and sufficient condition for
\begin{align}\label{ineq:mainpoint}
\sum_{\si\in S_m} \p \bigg[\prod_{j=1}^{n} \bigg(\frac1{a_j}\sum_{i=A_{j-1}+1}^{A_j} X_{\si(i)}\bigg)\bigg] \leq \sum_{\si\in S_m} \p \bigg[\prod_{j=1}^{n} \bigg(\frac1{b_j}\sum_{i=B_{j-1}+1}^{B_j} X_{\si(i)}\bigg)\bigg]
\end{align}
for all real numbers $X_1,X_2,\ldots,X_m\geq 0$ and every convex function $\p:\rp\to\r$ is that $a \maj b$.
\end{theorem}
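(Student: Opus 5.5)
The plan is to prove necessity by testing \eqref{ineq:mainpoint} on special inputs, and sufficiency by reducing $a\maj b$ to a single elementary transfer between two blocks and then proving that two-block case.

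\textbf{Necessity.} Take $\p(x)=x$ and $\p(x)=-x$; both are convex, so equality must hold, and this is automatic. To detect majorization, fix $k$ and set $X_1=\dots=X_k=1$ and $X_{k+1}=\dots=X_m=\varepsilon$. Take $\p(x)=x^r$ with $r\to\infty$, or a convex function peaking the comparison at the maximal value. For fixed $k$, the right-hand side of \eqref{ineq:mainpoint} is dominated by permutations putting the ones into as few blocks as possible. Comparing the largest attainable product, together with its multiplicity, as $\varepsilon\to0$ should force $\sum_{i\le j}a_i^\downarrow\le \sum_{i\le j} b_i^\downarrow$ for each $j$. I would first try to get this cleanly from the test functions $\p(x)=(x-t)_+$ with $t$ just below the maximum. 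I expect this direction to be routine bookkeeping.

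\textbf{Reduction of sufficiency.} By the classical Muirhead lemma for integer vectors, if $a\maj b$ then $a$ is obtained from $b$ by finitely many transfers. Each transfer replaces two parts $b_i=p$, $b_j=q$ with $p\ge q+2$ by $p-1,\,q+1$, and leaves the other parts unchanged. Since both sides of \eqref{ineq:mainpoint} are symmetric under reordering the parts, it suffices to treat one transfer. For a single transfer, group permutations according to which variables land in the unchanged blocks. Conditionally on that, the product of the other block means is a constant $c\ge0$, and $x\mapsto\p(cx)$ is still convex. So it suffices to prove the case $n=2$ with $m=p+q$, namely
\[
\sum_{\si}\p\big(M_{p-1}(\si)M_{q+1}(\si)\big)\le \sum_{\si}\p\big(M_p(\si)M_q(\si)\big),
\]
where $M_s(\si)$ denotes the mean of the first $s$ entries of $\si$, or of the remaining ones.

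\textbf{The two-block case (main obstacle).} Both sides are uniform averages of finitely many values over the pairs $(A,B)$ splitting $[m]$ into blocks of the given sizes. Both have the same mean, because the expectation of a product of disjoint block means is $\E[X_1X_2]$ in either case. By the Hardy--Littlewood--P\'olya characterization, it is enough to produce a doubly stochastic (Markov, mean-preserving) kernel $K$ from the $(p,q)$-splittings to the $(p-1,q+1)$-splittings such that each $\bar X_A\bar X_B$ with $|A|=p-1$ equals the $K$-average of the values $\bar X_{A'}\bar X_{B'}$ with $|A'|=p$. Jensen then gives the inequality.

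The natural candidates for $K$ move one element $x$ from $B$ to $A$. I would expand $\bar X_{A}\bar X_{B}$ using $\bar X_A=\frac1{p-1}\sum$ of $(p-1)$-subsets and its inverse relations, and look for nonnegative weights on splittings $(A',B')$ near $(A,B)$ that reproduce the bilinear form $\sum_{i\in A,j\in B}X_iX_j$. Diagonal terms $X_i^2$ do not occur, so only the counts of cross pairs must match. The needed weights come from a linear system in a few orbit-type parameters, such as $|A\cap A'|$. I expect its solution to be nonnegative exactly when $p-1\ge q+1$.

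If an exact kernel fails, I would fall back on verifying the stop-loss inequalities $\E(Y_b-t)_+\ge\E(Y_a-t)_+$ directly via this pairing. Finding these weights and checking their nonnegativity is where I expect the real difficulty.
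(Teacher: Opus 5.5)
\textbf{The two-block kernel you look for cannot exist.} Your reduction of sufficiency to two blocks is correct and matches the paper: Muirhead transfers, then conditioning on the unchanged blocks and using that $x\mapsto\p(cx)$ is convex. The gap is the two-block step.

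Suppose the weights are independent of $X$, for instance functions of $|A\cap A'|$. Then the identity $\bar X_A\bar X_B=\sum_{A'}K\,\bar X_{A'}\bar X_{B'}$ must hold coefficientwise, with no cancellation because $K\geq 0$. The left side has no monomial $X_iX_j$ with $i,j$ in the same block, and $|A|,|B|\geq 2$. So every $A'$ in the support must split no pair inside $B$. Since $|B|=q+1>|B'|$, this forces $B\subseteq A'$. It must also split no pair inside $A$, which forces either $A\subseteq A'$ (then $A'=[m]$) or $A\subseteq B'$ (then $p-1\leq q$). Both are impossible, so your linear system has no nonnegative solution for any $p\geq q+2$. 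Already $(X_1+X_2)(X_3+X_4)$ contains neither $X_1X_2$ nor $X_3X_4$, while every $X_iX_{[4]\setminus\{i\}}$ contains one of them.

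Allowing $X$-dependent weights does not rescue your ``move one element from $B$ to $A$'' splittings on their own. Their values are at least $X_xX_{B\setminus\{x\}}/(pq)$, which stays away from $0$ as $X_A\to 0$, whereas the target $\bar X_A\bar X_B$ tends to $0$.

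\textbf{The missing idea.} Jensen only needs, for each fixed $X$, a nonnegative matrix with constant row and column sums. So the weights may, and in fact must, depend on $X$. The paper's matrix $D(R,S)$ takes $R$ to be the smaller block of the balanced split and $S$ the smaller block of the unbalanced one. It is supported on both $S\subset R$ and $S\cap R=\varnothing$. Its entries are averages of ratios such as $X_{[m]\setminus(E\cup R)}/X_{[m]\setminus(E\cup S)}$; these are the $\lambda_{ij}=X_i/(X_i+X_j)$ of the $(2,2)$ versus $(3,1)$ example. The constant row and column sums and the mean identity are then checked term by term over pairs $(E,F)$. Without such a construction, your stop-loss fallback has no content.

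\textbf{Necessity.} Your sketch is not yet an argument. For products the heuristic runs the wrong way: a block containing only $\varepsilon$'s contributes a factor of order $\varepsilon$, so the large values come from spreading the ones over all blocks. Test functions concentrated near the maximum only see extremal values and their multiplicities, and you give no reason these recover every partial-sum inequality.

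The simple route is one distinguished coordinate together with \emph{all} convex $\p$. With $X_1=2$ and the other $X_i=1$, the left-hand side becomes $(m-1)!\sum_j a_j\p(1+1/a_j)$. As $\p$ ranges over convex functions, so does $\psi(x)=x\p(1+1/x)$, by a perspective transform. This gives $\sum_j\psi(a_j)\leq\sum_j\psi(b_j)$ for every convex $\psi$, which is $a\maj b$ by Hardy--Littlewood--P\'olya.
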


While the above result pertains to the theory of majorization, it seems to be completely new to the best of our knowledge.  Theorem \ref{thm:point} is proven by constructing an explicit averaging matrix mapping the vectors appearing in inequality \eqref{ineq:mainpoint}. For instance, for the simplest case $(2,2)\maj (3,1)$ we construct
\[
\begin{pmatrix}
{(X_1+X_2)(X_3+X_4)}\\%[2ex]
{(X_1+X_3)(X_2+X_4)}\\%[2ex]
{(X_1+X_4)(X_2+X_3)}
\end{pmatrix}
=
\frac13
\begin{pmatrix}
\lambda_{42}+\lambda_{32} &
\lambda_{41}+\lambda_{31} &
\lambda_{14}+\lambda_{24} &
\lambda_{13}+\lambda_{23}
\\%[1ex]
\lambda_{43}+\lambda_{23} &
\lambda_{14}+\lambda_{34} &
\lambda_{41}+\lambda_{21} &
\lambda_{12}+\lambda_{32}
\\%[1ex]
\lambda_{34}+\lambda_{24} &
\lambda_{13}+\lambda_{43} &
\lambda_{12}+\lambda_{42} &
\lambda_{31}+\lambda_{21}
\end{pmatrix}
\begin{pmatrix}
{X_1(X_2+X_3+X_4)}\\%[2ex]
{X_2(X_1+X_3+X_4)}\\%[2ex]
{X_3(X_1+X_2+X_4)}\\%[2ex]
{X_4(X_1+X_2+X_3)}
\end{pmatrix},
\]
with $\lambda_{ij}:=\frac{X_i}{X_i+X_j}$.

\subsection{Applications}
Let $X,Y$ be two real-valued random variables. We say $X \conv Y$ if for any convex function $\p :\r\to \r$ we have
$$
\E \p(X) \leq \E \p(Y).
$$
A sequence of random variables $X_1,X_2,\dots,X_m$ is exchangeable if \[(X_1,X_2,\dots,X_m) \stackrel{d}{=} (X_{\si(1)},X_{\si(2)},\dots,X_{\si(m)})\] for any permutation $\si \in S_m$. The following is an immediate consequence of Theorem \ref{thm:point}.

\begin{corollary}\label{thm:main}
Let $X_1,\dots,X_m$ be nonnegative exchangeable random variables. Let $a,b\in \z_{> 0}^n$ be partitions of $m$  such that $a \maj b$. Define 
\[
Y_a = \prod_{j=1}^n \left(\frac{1}{a_j}\sum_{i=A_{j-1} +1 }^{A_j} X_i \right) \quad \text{and} \quad Y_b = \prod_{j=1}^n \left(\frac{1}{b_j}\sum_{i=B_{j-1} +1 }^{B_j} X_i \right),
\]
where $A_j = a_1+\dots + a_j$, $B_j=b_1+\dots+b_j$ and $A_0=B_0=0$. Then $Y_a \conv Y_b$.
\end{corollary}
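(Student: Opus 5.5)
We deduce Corollary \ref{thm:main} from Theorem \ref{thm:point} by averaging over permutations. Fix a convex function $\p:\r\to\r$ and restrict it to $\rp$; the restriction is convex on $\rp$, so Theorem \ref{thm:point} applies to it. For a fixed realization $X_1(\omega),\dots,X_m(\omega)\ge 0$, inequality \eqref{ineq:mainpoint} with $a\maj b$ gives
\[
\sum_{\si\in S_m} \p\big(Y_a^{\si}(\omega)\big) \le \sum_{\si\in S_m} \p\big(Y_b^{\si}(\omega)\big),
\]
where $Y_a^\si$ and $Y_b^\si$ denote $Y_a$ and $Y_b$ with $X_i$ replaced by $X_{\si(i)}$. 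Since this holds for every $\omega$, we take expectations of both sides.

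By exchangeability, $(X_{\si(1)},\dots,X_{\si(m)})\stackrel{d}{=}(X_1,\dots,X_m)$. Since $Y_a^\si$ is the same measurable function applied to the permuted vector, it follows that $Y_a^\si\stackrel{d}{=}Y_a$, and hence $\E\p(Y_a^\si)=\E\p(Y_a)$ for each $\si$; the same holds for $b$. Summing over $S_m$ and dividing by $m!$ yields $\E\p(Y_a)\le\E\p(Y_b)$, which is the statement $Y_a\conv Y_b$.

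The only point needing care is integrability, so that the expectations make sense and linearity applies. A convex $\p$ is bounded below by an affine function $\ell$. If $\E|Y_a|$ and $\E|Y_b|$ are finite, then $\E\p(Y_a^\si)$ and $\E\p(Y_b^\si)$ are well defined in $(-\infty,+\infty]$. In that case we write $\p=(\p-\ell)+\ell$ with $\p-\ell\ge 0$, apply monotonicity of expectation to the nonnegative parts, and use linearity on the affine parts, which are integrable. If $Y_a$ and $Y_b$ are not integrable, we adopt the usual convention that convex order is only asserted for integrable variables, or for those $\p$ for which the expectations exist. We expect no genuine obstacle here: the whole content of the corollary lies in Theorem \ref{thm:point}.
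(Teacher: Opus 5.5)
Your proposal is correct and is exactly the argument the paper has in mind when it calls the corollary an immediate consequence of Theorem \ref{thm:point}: apply \eqref{ineq:mainpoint} pointwise to the restriction of $\p$ to $[0,\infty)$, take expectations, and use exchangeability to replace each $\E\p(Y_a^\si)$ by $\E\p(Y_a)$, and likewise for $b$. Your integrability discussion goes beyond what the paper writes. Note also that applying the theorem with $\p(x)=\pm x$ gives $\sum_\si Y_a^\si=\sum_\si Y_b^\si$ pointwise, hence $\E Y_a=\E Y_b$ in $[0,\infty]$, so integrability of one of $Y_a,Y_b$ forces integrability of the other.
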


The simplest nontrivial instance of this result is the following inequality
\begin{align}\label{ineq:example}
\E (X_1+X_2)^s(X_3+X_4)^s \leq  (4/3)^s \E (X_1+X_2+X_3)^sX_4^s,
\end{align}
which holds for any $s\in (-\infty,0]\cup [1,\infty)$. The inequality is reversed for $s\in (0,1)$. This particular instantiation can be shown by other classical AM--GM-like or Muirhead-like inequalities in some cases of $s$, but these cannot deal with the general case.

Let $X_1,X_2,\ldots$ be i.i.d.\ nonnegative random variables and write for real $p$
\[
\beta_k(p)=\E \left[\left(\frac{X_1+\dots+X_k}{k}\right)^p \right].
\]
Lamkin and Tkocz have recently made the following conjecture.

\begin{conjecture}[Lamkin--Tkocz \cite{LT}]\label{conj:LT}
For every $p>1$ or $p<0$, and every i.i.d.\ sequence of nonnegative random variables we have
\[
\beta_k(p)^2\leq \beta_{k-1}(p)\beta_{k+1}(p),
\qquad k\geq2.
\]
In other words, $(\be_k(p))_{k\geq 1}$ is log-convex. Moreover, $(\be_k(p))_{k\geq 1}$ is log-concave if $0\leq p\leq 1$.
\end{conjecture}

Lamkin and Tkocz \cite{LT} proved Conjecture \ref{conj:LT} for $p\leq 1$, while for $p>1$ they
had to assume that $k\geq p^2$ and $p$ is an integer. More recently, Jiao and Li \cite{JL} proved it for Gamma random
variables and all $p\geq1$ and for Poisson random variables for integral
$p\geq1$, while Ouimet \cite{Ouimet} proved it for Bernoulli random variables for all $p\geq1$. Lamkin and Tkocz also observed that the theorem of
Boland, Proschan and Tong \cite{BPT} does not resolve the conjecture, since the
function $(x,y)\mapsto (xy)^p$ is neither convex nor concave when $p > 1/2$ and, in the other cases, the obvious application of convexity/concavity yields the inequality with a worse constant.

An immediate consequence of Corollary \ref{thm:main} is the solution of Conjecture \ref{conj:LT} in the following more general form.

\begin{corollary}\label{cor:LTgeneral}
Let $a,b\in\z_{>0}^n$ be partitions of the same integer such that $a\maj b$.
Then for $p \in (-\infty,0]\cup [1,\infty)$ we have
\[
\prod_{j=1}^n\beta_{a_j}(p)
\leq
\prod_{j=1}^n\beta_{b_j}(p).
\]
For $p\in (0,1)$ the inequality is reversed. In particular, the sequence $(\be_k(p))_{k\geq 1}$ is log-convex for $p \in (-\infty,0]\cup[1,\infty)$ and log-concave for $p\in (0,1)$.
\end{corollary}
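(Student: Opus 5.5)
We prove Corollary \ref{cor:LTgeneral} from Corollary \ref{thm:main}, choosing the convex test function according to $p$.

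\textbf{Setup.} Let $m=\sum_j a_j=\sum_j b_j$ and take $X_1,\dots,X_m$ i.i.d. copies of the given nonnegative variable; they are in particular exchangeable. The blocks $\{A_{j-1}+1,\dots,A_j\}$ are disjoint, so the block averages in $Y_a$ are independent, and the $j$-th one is distributed as $(X_1+\dots+X_{a_j})/a_j$. Hence
\[
\E\, Y_a^p=\prod_{j=1}^n \E\bigg[\bigg(\frac{1}{a_j}\sum_{i=A_{j-1}+1}^{A_j}X_i\bigg)^p\bigg]=\prod_{j=1}^n\beta_{a_j}(p),
\]
and likewise $\E\, Y_b^p=\prod_j\beta_{b_j}(p)$. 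These identities hold in $[0,\infty]$ by independence, with the convention $0^p=\infty$ for $p<0$.

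\textbf{Applying convex order.} Since $Y_a,Y_b\ge 0$, we use $\p(x)=x^p$ on $\rp$. It is convex for $p\ge 1$, for $p=0$, and for $p<0$ (with value $+\infty$ at $0$), and it is concave for $p\in(0,1)$. Two technical points need care.

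\begin{itemize}
\item Corollary \ref{thm:main} is stated for convex $\p:\r\to\r$. So we first extend $x^p$ from $\rp$ to a finite convex function on $\r$. For $p\ge1$ set $\p(x)=0$ for $x<0$; this is convex because the function is nondecreasing and convex on $\rp$ with $\p(0)=0$. Alternatively, we can apply Theorem \ref{thm:point} directly: it is a pointwise statement on $\rp$, and taking expectations of \eqref{ineq:mainpoint} and using exchangeability gives $m!\,\E\p(Y_a)\le m!\,\E\p(Y_b)$.
\item Moments may be infinite. We handle this by truncation and monotone convergence. For $p\ge 1$, use $\p_N(x)=\min(x^p,\,\text{tangent line at }N)$-type convex truncations; more simply, use convex functions $\p_N\uparrow x^p$ that are finite and linear beyond $N$. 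For $p<0$, use $\p_\varepsilon(x)=(x+\varepsilon)^p$ and let $\varepsilon\downarrow0$.
\end{itemize}

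Monotone convergence then gives $\E Y_a^p\le \E Y_b^p$ in $[0,\infty]$, which is the claimed inequality. For $p\in(0,1)$, apply the result to the convex function $-x^p$ and get the reverse inequality. Here $x^p\le 1+x$, so finiteness of the first moment makes everything finite. If $\E X=\infty$, both sides are infinite, and we interpret the inequality accordingly, or truncate with $\min(x,N)^p$, a concave nondecreasing function, via $-\p$.

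\textbf{Log-convexity.} For $k\ge2$ take $a=(k,k)$ and $b=(k+1,k-1)$, both partitions of $2k$. Then $a\maj b$, since $k\le k+1$ and the totals agree. Therefore $\beta_k(p)^2\le\beta_{k+1}(p)\beta_{k-1}(p)$ for $p\in(-\infty,0]\cup[1,\infty)$, with the reverse inequality for $p\in(0,1)$. This is Conjecture \ref{conj:LT}.

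The only delicate step is justifying the use of $x^p$ as a test function: it is singular at $0$ for $p<0$, the moments may be infinite, and it is defined only on $\rp$. The truncation arguments above, or a direct appeal to the pointwise Theorem \ref{thm:point}, handle this.
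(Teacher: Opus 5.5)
Your proposal is correct and follows the paper's route. Like the paper, you apply Corollary~\ref{thm:main} (equivalently Theorem~\ref{thm:point} pointwise) with $\p(x)=\pm x^p$, factor $\E Y_a^p=\prod_j\beta_{a_j}(p)$ using independence of the disjoint blocks, and specialize to $(k,k)\maj(k+1,k-1)$ for the log-convexity statement.

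One slip needs fixing. For $p\in(0,1)$ it is false that $\E X=\infty$ forces both sides to be infinite: a Pareto tail of index $\alpha\in(p,1)$ has $\E X=\infty$ yet $\beta_k(p)\le k^{1-p}\E X^p<\infty$. So in that case you must rely on your truncation $-\min(x,N)^p$. Feed it to Theorem~\ref{thm:point} directly rather than to Corollary~\ref{thm:main}: its right derivative at $0$ is $-\infty$, so it has no finite convex extension to $\r$.
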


It is also worth noting that Theorem \ref{thm:point} solves the pointwise conjecture posed by Lamkin and Tkocz in  \cite[Remark 8]{LT}.

\section{Proof of Theorem \ref{thm:point}}
\begin{proof}
\noindent {\bf Sufficiency.} Since $a,b\in \z_{>0}^n$ are partitions of $m$ such that $a\maj b$, it is well-known that $a=T_1 T_2 \cdots T_l b $ for some sequence of doubly stochastic matrices $T_j$, each  with only two off-diagonal terms (the so-called Robin Hood transformations; see \cite{MOA}). In particular, it suffices to show the desired result when $n=2$. We can assume that all $X_i>0$. In this case, for a set $S\subset [m]:=\{1,2,\dots,m\}$ we denote
\[
X_S = \sum_{i\in S} X_i \quad \text{and} \quad Y_S = \frac{X_S X_{[m]\setminus S}}{|S||[m]\setminus S|},
\]
and define the vectors $\Z_k = (Y_S)_{|S|=k}$. Note that $\Z_{k}=\Z_{m-k}$ (upon rearrangement). It is now necessary and sufficient to show that $\Z_{k'}$ is a mean of $\Z_{k}$ for $k<k'\leq m/2$. That is, we have to construct a matrix $[D(R,S)]_{|R|=k',|S|=k}$ with nonnegative entries and the following properties
\begin{align*}
\sum_{|S|=k} D(R,S) = 1, \quad \sum_{|R|=k'} D(R,S) = \frac{\binom{m}{k'}}{\binom{m}k} \quad \text{and} \quad \sum_{|S|=k} D(R,S)Y_S= Y_R.
\end{align*}
Then the desired inequality would follow trivially by convexity:
\[
\binom{m}{k'}^{-1}\sum_{|R|=k'} \p(Y_R) \leq \binom{m}{k'}^{-1}\sum_{|R|=k'} \sum_{|S|=k} D(R,S) \p(Y_S) = \binom{m}k^{-1} \sum_{|S|=k} \p(Y_S).
\]
The matrix is defined as follows
\begin{align}
D(R,S) &=
\begin{cases}
  \displaystyle \frac{1}{N}\sum_{\substack{E \subset [m]\setminus R \\ |E| = k}}
    \frac{X_{[m]\setminus (E\cup R)}}{X_{[m]\setminus (E\cup S)}},
    & \text{if } S \subset R, \\[3ex]
  \displaystyle\frac{1}{N} \sum_{\substack{F\subset  R \\ |F|=k'-k} }
    \frac{X_F}{X_{[m]\setminus (S\cup (R\setminus F))}},
    & \text{if } S \cap R = \varnothing, \\[3ex]
  0, & \text{otherwise,}
\end{cases}
\end{align}
where $N=\binom{m-k'}{k} \binom{k'}{k}$. We now verify the required properties. Summing over $|S|=k$ we obtain
\begin{align*}
 \sum_{|S|=k} D(R,S)  & = \frac{1}{N} \sum_{\substack{E\subset [m]\setminus R \\ |E| = k \\ S\subset R \\ |S|=k}} \frac{X_{[m]\setminus (E\cup R)}}{X_{[m]\setminus (E\cup S)}}+  \frac{1}{N} \sum_{\substack{F\subset  R \\ |F|=k'-k \\ S\subset [m]\setminus R \\ |S|=k}} \frac{X_F}{X_{[m]\setminus (S\cup (R\setminus F))}} \\
 & =  \frac{1}{N} \sum_{\substack{E\subset [m]\setminus R \\ |E| = k \\ F\subset R \\ |F|=k'-k}} \frac{X_{[m]\setminus (E\cup (R\setminus F))}-X_F}{X_{[m]\setminus (E\cup (R \setminus F))}}+  \frac{1}{N} \sum_{\substack{F\subset  R \\ |F|=k'-k \\ S\subset [m]\setminus R \\ |S|=k}} \frac{X_F}{X_{[m]\setminus (S\cup (R\setminus F))}} \\
 & = \frac{1}{N} \sum_{\substack{E\subset [m]\setminus R \\ |E| = k \\ F\subset R \\ |F|=k'-k}}1 = 1,
\end{align*}
where in the first identity we applied the change of variables $S=R\setminus F$ in the first sum. Summing over $|R|=k'$ we have

\begin{align*}
 \sum_{|R|=k'} D(R,S)  & = \frac{1}{N} \sum_{\substack{E\subset [m]\setminus R \\ |E| = k \\ R\supset S \\ |R|=k'}} \frac{X_{[m]\setminus (E\cup R)}}{X_{[m]\setminus (E\cup S)}}+  \frac{1}{N} \sum_{\substack{F\subset  R \\ |F|=k'-k \\ R\subset [m]\setminus S \\ |R|=k'}} \frac{X_F}{X_{[m]\setminus (S\cup (R\setminus F))}} \\
 &  = \frac{1}{N} \sum_{\substack{E\subset [m]\setminus (S\cup F) \\ |E| = k \\ F\subset [m]\setminus S \\ |F|=k'-k}} \frac{X_{[m]\setminus (E\cup S)}-X_F}{X_{[m]\setminus (E\cup S)}}+  \frac{1}{N} \sum_{\substack{F\subset [m]\setminus S  \\ |F|=k'-k \\ E\subset  [m]\setminus (S\cup F) \\ |E|=k}} \frac{X_F}{X_{[m]\setminus (S\cup E)}}  \\
 & =  \frac{1}{N} \sum_{\substack{E\subset [m]\setminus (S\cup F) \\ |E| = k \\ F\subset [m]\setminus S  \\ |F|=k'-k}}1 = \frac{\binom{m}{k'}}{\binom{m}k},
\end{align*}
where in the first identity we applied the change of variables $R=S \sqcup F$, in the first sum, and $R=E\sqcup F$ in the second sum. Summing $D(R,S)Y_S$ over $|S|=k$, and using the same change of variables as before, we obtain
\begin{align*}
Nk(m-k) \sum_{|S|=k} D(R,S)Y_S  
 & =   \sum_{\substack{E\subset [m]\setminus R \\ |E| = k \\ F\subset R \\ |F|=k'-k}} \frac{X_{[m]\setminus (E\cup R)}X_{R\setminus F} X_{[m]\setminus (R\setminus F)}}{X_{[m]\setminus (E\cup (R \setminus F))}}+   \sum_{\substack{F\subset  R \\ |F|=k'-k \\ E\subset [m]\setminus R \\ |E|=k}} \frac{X_F X_{E} X_{[m]\setminus E}}{X_{[m]\setminus (E\cup (R\setminus F))}}.
\end{align*}
To finish, set $A=[m]\setminus R$. For each fixed pair $(E,F)$ in the sums above, the corresponding summands satisfy
\begin{align*}
&\frac{X_{A\setminus E}X_{R\setminus F}X_{A\cup F}
      +X_FX_EX_{[m]\setminus E}}{X_{(A\setminus E)\cup F}} \\
&\qquad =
\frac{X_{A\setminus E}X_{R\setminus F}
      \bigl(X_{(A\setminus E)\cup F}+X_E\bigr)
      +X_FX_E\bigl(X_{(A\setminus E)\cup F}+X_{R\setminus F}\bigr)}
     {X_{(A\setminus E)\cup F}} \\
&\qquad =
\frac{X_{(A\setminus E)\cup F}\bigl(X_{A\setminus E}X_{R\setminus F} + X_FX_E\bigr) +  \bigl(X_{A\setminus E} + X_F\bigr) X_EX_{R\setminus F}}
     {X_{(A\setminus E)\cup F}} \\
&\qquad 
= X_{R\setminus F}X_A+X_EX_F.
\end{align*}
Consequently,
\begin{align*}
Nk(m-k) \sum_{|S|=k}D(R,S)Y_S
&=\binom{m-k'}{k}X_A\sum_{\substack{F\subset R\\ |F|=k'-k}}X_{R\setminus F}
  +\left(\sum_{\substack{E\subset A\\ |E|=k}}X_E\right)
  \left(\sum_{\substack{F\subset R\\ |F|=k'-k}}X_F\right)\\
&=\binom{m-k'}{k}\binom{k'-1}{k-1}X_AX_R
  +\binom{m-k'-1}{k-1}\binom{k'-1}{k'-k-1}X_AX_R\\
&=N\left(\frac{k}{k'}+
  \frac{k}{m-k'}\frac{k'-k}{k'}\right)X_AX_R\\
&=N\frac{k(m-k)}{k'(m-k')}X_RX_{[m]\setminus R}
 =Nk(m-k)Y_R.
\end{align*}
This proves the last required property of the matrix $D(R,S)$.

\smallskip

\noindent {\bf Necessity.} Take $X_1=2$ and $X_2=\cdots = X_m=1$. It is clear that
\[
\frac{1}{(m-1)!}\sum_{\si\in S_m} \p \bigg[\prod_{j=1}^{n} \bigg(\frac1{a_j}\sum_{i=A_{j-1}+1}^{A_j} X_{\si(i)}\bigg)\bigg]  = \sum_{j=1}^n a_j \p(1+1/a_j).
\]
Since any convex function  $\psi(x) $ for $x\geq0$ can be written as $\psi(x) =x\p(1+1/x)$ for some other convex function $\p(x)$ for $x\geq1$, this completes the proof.
\end{proof}

\section*{Acknowledgments}
F.G. acknowledges support from the following funding agencies: The Office of Naval Research GRANT14201749 (award number N629092412126), The Serrapilheira Institute (Serra-2211-41824), FAPERJ (E-26/200.209/2023 and E-26/210.245/2024) and CNPq (309910/2023-4). J.M. was partially supported by the AMS Stefan Bergman Fellowship and the Simons Foundation Grant $\# 453576$.

\end{document}